\newcommand{\tax}{\textup{\textit{T}\thinspace}}
\newcommand{\pn}{\textup{\textbf{pn1}\thinspace}}
\newcommand{\pnn}{\textup{\textbf{pn2}\thinspace}}
\newcommand{\pncan}{\textup{\textbf{can-pn1}\thinspace}}
\newcommand{\system}{\textup{\textbf{mIT}\thinspace}}
\newcommand{\ipc}{\textup{\textbf{IPC}\thinspace}}
\newcommand{\kax}{\textup{\textit{K}\thinspace}}
\newcommand{\dax}{\textup{\textit{D}\thinspace}}
\newcommand{\modpon}{\textup{\textbf{MP}\thinspace}}
\newcommand{\ext}{\textup{\textbf{RE}\thinspace}}
\newcommand{\mon}{\textup{\textbf{RM}\thinspace}}
\newcommand{\sfour}{\textup{\textbf{S4}\thinspace}}
\newcommand{\four}{\textup{\textit{4}\thinspace}}
\title[Simple example of weak modal logic based on intuitionistic core]{Simple example \\ of weak modal logic \\ based on intuitionistic core}
\author{Tomasz Witczak}
\address{Institute of Mathematics\\ University of Silesia\\ Bankowa~14\\ 40-007 Katowice\\ Poland}
\email{tm.witczak@gmail.com}
\date{}
\theoremstyle{Theorem}
\newtheorem{tw}{Theorem}[section]
\theoremstyle{Lemma}
\newtheorem{lem}[tw]{Lemma}
\theoremstyle{Remark}
\newtheorem{rem}[tw]{Remark}
\theoremstyle{Definition}
\newtheorem{df}[tw]{Definition}
\theoremstyle{Remark}
\begin{document}

\maketitle

\begin{abstract}
In this paper we present simple example of propositional logic which has one modal operator and is based on intuitionistic core. This system is very weak in modal sense - e.g. rules of regularity or monotonicity do not hold. It has complete semantics composed of possible worlds equipped with neighborhoods and pre-order relation. We discuss certain restrictions imposed on those structures. Also, we present characterization of axiom \four known from logic \sfour.
\end{abstract}

\section{Introduction}

Intuitionistic modal logics are often interpreted in terms of bi-relational structures. Such frames contain two relations between possible worlds: $\leq$ (pre-order, responsible for the intuitionistic aspect of logic) and $R$ (modal reachability). This approach was widely investigated in \cite{bozic}, \cite{simpson} and \cite{plotkin}. In \cite{atw} we established sound and complete semantics based on the notion of neighborhoods. In fact, we modified typical neighborhood semantics for intuitionism (presented in \cite{moniri}) by removing \textit{superset axiom}. Thus, we could speak not only about \textit{minimal} neighborhoods of worlds - but also about \textit{maximal} ones. We assumed that $w \Vdash \Box \varphi$ \textit{iff} $\varphi$ is satisfied in each world of maximal $w$-neighborhood. This definition (similar to the one used by Kojima in \cite{kojima}) was closely related with well-known relational definition of necessity (both for classical and intuitionistic systems) which states that $w \Vdash \Box \varphi$ \textit{iff} $\varphi$ is satisfied in each world visible from $w$ in modal sense.

In the present paper our approach is different - close to the typical neighborhood definition. Thus we assume (among other conditions) that formula $\Box \varphi$ is forced in $w$ \textit{iff} the whole $V(\varphi)$ belongs to the family of $w$-neighborhoods. In our earlier research we could quite easily transform neighborhood structures into bi-relational frames. Now we do not even \textit{expect} such duality - because our aim is to point out those features of neighborhoods which cannot be simulated (at least in any easy way) by relational structures.

At the same time our system is very weak in its modal aspect. It is because in classical setting neighborhoods show their usefulness especially in the universe of non-normal logics (see \cite{pacuit} for longer discussion). In some sense we \textit{isolate} axiom \tax. Quite interesting question was how to define meaning of $\Box \varphi$ and how combine $\leq$ with neighborhoods to obtain intuitionistic monotonicity of forcing. We show certain simple solution and some of its possible modifications. 

Recently, we found out that investigations in the same field are provided by Dalmonte, Grellois and Olivetti (\cite{dalmonte}). Some of their intuitions and concepts are similar to ours but there are certain differences. For example, our calculi is essentialy mono-modal (and our reflexions upon possibility operator are only secondary). Moreover, our logic always contains axiom \tax. As for the semantics, we did not use "two-dimensional" neighborhoods but only typical ones. 

\section{Alphabet and language}

Below we list basic components of our language:

\begin{enumerate}
\item $PV$ is a fixed denumerable set of propositional variables $p, q, r, s, ...$
\item Logical connectives and operators are $\land$, $\lor$, $\rightarrow$, $\bot$ and $\Box$.
\item The only derived connective is $\lnot$ (which means that $\lnot \varphi$ is a shortcut for $\varphi \rightarrow \bot$).
\end{enumerate}

Formulas are generated recursively in a standard manner: if $\varphi$, $\psi$ are \textit{wff's} then also $\varphi \lor \psi$, $\varphi \land \psi$, $\varphi \rightarrow \psi$ and $\Box \varphi$. Semantical interpretation of this language will be presented in the next section. Attention: $\Leftarrow, \Rightarrow$ and $\Leftrightarrow$ are used only on the level of meta-language (which is classical).

\section{Intuitionistic neighborhood semantics}

\subsection{The definition of frame}
Our basic structure is a pre-ordered neighborhood frame for intuitionistic modal logic (\pn-frame) defined as it follows:

\begin{df}
\label{pndef}

\pn-frame is a tripe $\langle W, \mathcal{N}, \leq \rangle$ where $\leq$ is a partial order on $W$ and $\mathcal{N}$ is a function from $W$ into $P(P(W))$ such that:

\begin{equation}
\label{cond}
w \leq v, v \in X \subseteq W, X \in \mathcal{N}_{w} \Rightarrow X \in \mathcal{N}_{v}
\end{equation}

\end{df}

\subsection{Valuation and model}
Having frame, we can establish model:

\begin{df}
\label{pnmodeldef}

\pn-model is a quadruple $\langle W, \mathcal{N}, \leq, V \rangle$ where $\langle W, \mathcal{N}, \leq \rangle$ is \pn-frame and $V$ is a function from $PV$ into $P(W)$ such that: if $w \in V(q)$ and $w \leq v$ then $v \in V(q)$.

\end{df}

\begin{df}
For every \pn-model $M = \langle W, \mathcal{N}, \leq, V \rangle$, forcing of formulas in a world $w \in W$ is defined inductively:

\begin{enumerate}

\item $w \nVdash \bot$

\item $w \Vdash q$ $\Leftrightarrow$ $w \in V(q)$ for any $q \in PV$

\item $w \Vdash \varphi \lor \psi$ $\Leftrightarrow$ $w \Vdash \varphi$ or $w \Vdash \psi$

\item $w \Vdash \varphi \land \psi$ $\Leftrightarrow$ $w \Vdash \varphi$ and $w \Vdash \psi$

\item $w \Vdash \varphi \rightarrow \psi$ $\Leftrightarrow$ $v \nVdash \varphi$ or $v \Vdash \psi$ for each $v \in W$ such that $w \leq v$

\item $w \Vdash \Box \varphi$ $\Leftrightarrow$ $w \Vdash \varphi$ and $\{z \in W; z \Vdash \varphi\} \in \mathcal{N}_{w}$

\end{enumerate}

\end{df}

As we can see, $\Box \varphi$ is forced if $V(\varphi) \in \mathcal{N}_{w}$ (just like in the classical approach) but we also require that $\varphi$ should be satisfied in an intuitionistic sense. Thus, we obtain theorem about monotonicity of forcing:

\begin{tw}
In every \pn-model $M = \langle W, \mathcal{N}, \leq, V \rangle$: if $w \Vdash \varphi$ and $w \leq v$, then $v \Vdash \varphi$.
\end{tw}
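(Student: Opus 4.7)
The plan is a routine induction on the complexity of $\varphi$, with persistence on atoms and the neighborhood condition (\ref{cond}) handling the modal case. Fix $M = \langle W, \mathcal{N}, \leq, V \rangle$, assume $w \leq v$, and proceed by cases on $\varphi$.

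First I would dispatch the easy cases. For $\varphi = \bot$ there is nothing to prove, since no world forces $\bot$. For $\varphi = q \in PV$, the statement is exactly the persistence condition built into Definition \ref{pnmodeldef}. For $\varphi = \psi \lor \chi$ and $\varphi = \psi \land \chi$, apply the inductive hypothesis to the immediate subformulas. For $\varphi = \psi \rightarrow \chi$, I would use transitivity of $\leq$: if $v \leq u$, then $w \leq u$ as well, so the condition satisfied by $w$ already tells us $u \nVdash \psi$ or $u \Vdash \chi$, which is exactly what $v$ needs.

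The interesting case is $\varphi = \Box \psi$. Suppose $w \Vdash \Box \psi$. By the forcing clause this means $w \Vdash \psi$ and $X := \{z \in W : z \Vdash \psi\} \in \mathcal{N}_{w}$. I would first invoke the inductive hypothesis on $\psi$ to conclude $v \Vdash \psi$, which gives both the first conjunct needed for $v \Vdash \Box \psi$ and the fact that $v \in X$. Then, with $w \leq v$, $v \in X \subseteq W$, and $X \in \mathcal{N}_{w}$ all in hand, condition (\ref{cond}) from Definition \ref{pndef} delivers $X \in \mathcal{N}_{v}$, so $v \Vdash \Box \psi$ as required.

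No step here is really an obstacle; the only subtlety worth flagging is that the set $X$ whose membership in $\mathcal{N}_{w}$ we transfer to $\mathcal{N}_{v}$ is precisely the truth-set of $\psi$, so condition (\ref{cond}) applies without any change of the witnessing set. This is exactly why the frame condition is phrased with a generic $X$ rather than, say, imposed on minimal neighborhoods only: it matches the $V(\varphi)$-based clause for $\Box$ directly.
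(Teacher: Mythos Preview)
Your proof is correct and follows essentially the same approach as the paper's: a standard induction on formula complexity, with the modal case handled by applying the induction hypothesis to get $v \Vdash \psi$ (hence $v \in X$) and then invoking condition~(\ref{cond}) to transfer $X$ from $\mathcal{N}_w$ to $\mathcal{N}_v$. If anything, your write-up is slightly more careful in spelling out the non-modal clauses and the role of $v \in X$.
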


\begin{proof}
The proof goes by induction over the complexity of formulas. Almost all cases are easy (or rather just like in standard intuitionistic calculus). Thus, we shall discuss only the modal case.

Assume that $\varphi = \Box \gamma$. Suppose that $w \Vdash \varphi$. Thus $w \Vdash \gamma$ and $\{z \in W; z \Vdash \gamma\} \in \mathcal{N}_{w}$. Let us take $v \in W$ such that $w \leq v$. Of course $v \Vdash \varphi$ (by induction hypothesis). Thus, $v \in \{z \in W; z \Vdash \gamma\}$ - which means that $v \in X \in \mathcal{N}_{w}$. From (\ref{cond}) we can say that $X = V(\gamma) \in \mathcal{N}_{v}$. Hence, $v \Vdash \Box \gamma$ and finally $v \Vdash \varphi$.

\end{proof}

\subsection{About bi-relational approach}

In general, there is no universal standard of bi-relational approach for intuitionistic modal logics. There are however some clues or some popular and widely accepted notions. For example, many authors agree that forcing of $\Box \varphi$ should be defined in a following way:

%\label{rel1}
$w \Vdash \Box \varphi \Leftrightarrow $ for each $v \in W$ such that $w R v$, $v \Vdash \varphi$

It means that formula $\varphi$ should be accepted in each world which is reachable from $w$ by means of relation $R$ (modal \textit{reachability} or \textit{visibility}). One could say that in our system we should use another, slightly more complicated clause (to hold connection with our basic definition):

%\label{rel2}
$w \Vdash \Box \varphi \Leftrightarrow w \Vdash \varphi$ and for each $v \in W$ such that $w R v$, $v \Vdash \varphi$

Be as it may, we can show that in general it is impossible to transform an arbitrary \pn-model into the bi-relational one. For this, we use simple argument presented by Pacuit in \cite{pacuit} for classical modal setting. Suppose that we have \pn-model $M = \{W, \mathcal{N}, \leq, V\}$ such that: $W = \{w, v\}, \mathcal{N}_{w} = \{ \{w \} \}, \mathcal{N}_{v} = \{ \{v \} \}, V(\varphi) = \{w, v\}, V(\psi) = \{w\}$ and $\leq$ is empty. Now we can say that $w \Vdash \Box (\varphi \land \psi)$ - because $w \Vdash \varphi \land \psi$ and $\{z \in W; z \Vdash \varphi \land \psi\} = \{w\} \in \mathcal{N}_{w}$. At the same time, $w \nVdash \Box \varphi$ because $\{z \in W; z \Vdash \varphi\} = \{w, v\} \notin \mathcal{N}_{w}$.

Assume now that we leave our worlds and valuation without any changes but we have certain modal relation $R \subseteq W \times W$ and we use second clause to define forcing of necessity. We want to say that $w \nVdash_{R} \varphi$ (while $w \Vdash_{R} \Box (\varphi \land \psi)$). It gives us disjunction of two options. First, $w \nVdash_{R} \varphi$. Then $w \nVdash_{R} \varphi \land \psi$ and thus $w \nVdash_{R} \Box (\varphi \land \psi)$. Contradiction. Second, there is $z \in W$ such that $w R z$ and $z \nVdash_{R} \varphi$. Suppose that $z$ is $w$. Then we repeat earlier reasoning. So, check $z = v$. If $v \nVdash_{R} \varphi$ and $w R v$ then we cannot say that conjunction $\varphi \land \psi$ is accepted in each world $R$-visible from $w$. Thus, $w \nVdash_{R} \Box (\varphi \land \psi)$.

%Another example: $W = \{W, \leq, \mathcal{N}, V\}$ such that $W = \{w, v, u\}, \mathcal{N}_{w} = \{ \{w, v\}, \{w, u\} \}, \mathcal{N}_{v} = \{ \{v \} \}, \mathcal{N}_{u} = \{ \{u \} \}, V(\varphi) = \{w, v \}, V(\psi) = \{w, u\}$ and $\leq$ is empty. Now $w \Vdash \Box \varphi \land \Box \psi$ but $w \nVdash \Box (\varphi \land \psi)$. But if we assume that $w \Vdash_{R} \Box \varphi \land \Box \psi$ then both $\varphi$ and $\psi$ would be true in each world $R$-accessible from $w$. It would mean that $w \Vdash \Box (\varphi \land \psi)$.

Of course this example is in fact classical - but undoubtedly it works fine with our intuitionistic definitions.

\section{Axiomatization of our system}

In this section we present sound axiomatization of our logic. Below we show its components:

\begin{df}
\label{axio}
The \system-logic is the following set of formulas and rules: $\ipc \cup \{\tax, \modpon\}$ where:

\begin{enumerate}

\item \ipc is the set of all intuitionistic axiom schemes and their modal instances

\item \tax is the axiom scheme $\Box \varphi \rightarrow \varphi$

\item \modpon is \textit{modus ponens}: $\varphi, \varphi \vdash \psi \vdash \psi$

\item \ext is \textit{rule of extensionality}: $\varphi \leftrightarrow \psi \vdash \Box \varphi \leftrightarrow \Box \psi$

\end{enumerate}
\end{df}

The following theorem holds:

\begin{tw}
\label{sound}
\system is sound with respect to the class of all \pn-models.
\end{tw}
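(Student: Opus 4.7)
The plan is to proceed by induction on the length of a derivation in \system, showing that each axiom is valid in every \pn-model and that each rule preserves validity. By Definition \ref{axio} this amounts to verifying the intuitionistic axiom schemes (and their modal instances), the axiom \tax, and the rule \modpon, together with \ext if one wishes to include it from the same list.

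For the intuitionistic axioms, nothing beyond the standard Kripke-style argument is required: the clauses for $\land$, $\lor$, $\rightarrow$ and $\bot$ are the usual intuitionistic ones, and persistence of forcing under $\leq$---already established for modal formulas by the monotonicity theorem of the previous subsection---is exactly what is needed to validate each scheme and its modal instances. For \tax, the key observation is that clause (6) of the forcing definition has the conjunct ``$w \Vdash \varphi$'' built in, so $v \Vdash \Box \varphi$ implies $v \Vdash \varphi$ at every world $v$; consequently, for any $w$ and any $v \geq w$, $v \Vdash \Box \varphi \Rightarrow v \Vdash \varphi$, which is precisely $w \Vdash \Box \varphi \rightarrow \varphi$. \modpon preserves validity by a routine application of reflexivity of $\leq$ inside the clause for implication. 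Should one also check \ext, one notes that $\vDash \varphi \leftrightarrow \psi$ forces $\{z : z \Vdash \varphi\} = \{z : z \Vdash \psi\}$ in every \pn-model, so clause (6) delivers $w \Vdash \Box \varphi \Leftrightarrow w \Vdash \Box \psi$ at each world.

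I do not anticipate a substantive obstacle here: the semantics has been tailored precisely so that \tax is automatic (thanks to the extra conjunct in the $\Box$-clause) and so that condition \eqref{cond} delivers monotonicity for $\Box$-formulas, which are the only two modal-specific ingredients needed. The one care-point is to invoke the previously proved monotonicity theorem cleanly when validating the implication axioms on formulas containing modal subformulas, rather than trying to reprove persistence inline; and to be explicit that the induction hypothesis for the intuitionistic connectives treats $\Box \gamma$ as an atomic ``new variable'' whose forcing is already known to be persistent.
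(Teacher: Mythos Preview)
Your proposal is correct and follows essentially the same approach as the paper: the paper dismisses the \ipc\ part as routine and verifies \tax\ by noting that clause (6) of the forcing definition already contains the conjunct $w \Vdash \varphi$, so $v \Vdash \Box\varphi$ immediately yields $v \Vdash \varphi$. Your treatment is in fact slightly more complete, since you also spell out why \modpon\ and \ext\ preserve validity, whereas the paper leaves these implicit.
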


\begin{proof}

It is easy to check that axioms and rules of \ipc are satisfied. Let us check axiom \tax. Suppose that there is \pn-model $M$ with $w \in W$ such that $w \nVdash \tax$. Thus we have $v \in W$, $w \leq v$ such that $v \Vdash \Box \varphi$ but $v \nVdash \varphi$ for certain $\varphi$. But from the definition of forcing we have immediately that $v \Vdash \varphi$. Contradiction.

\end{proof}

It can be fruitful to show explicitly that some well-known axioms and rules do not hold in our structures:

\begin{enumerate}

\item \kax: $\Box(\varphi \rightarrow \psi) \rightarrow (\Box \varphi \rightarrow \Box \psi)$

Let us consider the following model $M = \langle W, \mathcal{N}, \leq, V \rangle$: $W = \{s, c, v \}, v \leq s, \mathcal{N}_{v} = \{ \{s, c, v\} \}, \mathcal{N}_{s} = \{ \{s\}, \{s, c, v\}\}, \mathcal{N}_{c} = \{ \{c\} \}, V(\varphi) = \{s\}$ and $V(\psi) = \{s, c\}$.

Now: $v \Vdash \Box (\varphi \rightarrow \psi)$ because $v \Vdash \varphi \rightarrow \psi$ (note that $v \nVdash \varphi$ and $s \Vdash \varphi \rightarrow \psi)$ and $\{z \in W; z \Vdash \varphi \rightarrow \psi\} = \{s, c, v\} \in \mathcal{N}_{v}$. On the other side, $v \nVdash \Box \varphi \rightarrow \Box \psi$. It is because we have $s$, $v \leq s$ such that $s \Vdash \Box \varphi$ and $s \nVdash \Box \psi$. The last thing comes from the fact that $\{s, c\} \notin \mathcal{N}_{s}$.

\item \mon (\textit{rule of monotonicity}): $\varphi \rightarrow \psi \vdash \Box \varphi \rightarrow \Box \psi$

Consider the following model: $W = \{w, v\}, \mathcal{N}_{w} = \{ \{w\} \}, \mathcal{N}_{v} = \{ \{v\} \}, V(\varphi= \{w\}$ and $V(\psi) = \{w, v\}$. Of course $w \Vdash \varphi \rightarrow \psi$ and the same for $v$. Now $w \Vdash \Box \varphi$ because $w \Vdash \varphi$ and $\{z \in W; z \Vdash \varphi\} = \{v\} \in \mathcal{N}_{v}$. At the same time, $w \nVdash \Box \psi$ because $\{z \in W; z \Vdash \psi\} = \{v, z\} \notin \mathcal{N}_{v}$.
\end{enumerate}

What is interesting, is the fact that \dax axiom holds, just like in classical case. Recall that \dax is defined as $\Box \varphi \rightarrow \lnot \Box \lnot \varphi$. Suppose now that there exist \pn-model $M$ such that $w \in W$ and $w \nVdash \dax$. Thus we have $v \in W, w \leq v$ such that $v \Vdash \Box \varphi$ and $v \nVdash \lnot \Box \lnot \varphi$. Now $v \Vdash \varphi$ and $\{z \in W; z \Vdash \varphi\} \in \mathcal{N}_{v}$ but there is also $s \in W, v \leq s$ such that $s \Vdash \Box \lnot \varphi$. This means that $s \Vdash \lnot \varphi$ and $\{z \in W; z \Vdash \lnot \varphi\} \in \mathcal{N}_{s}$. But if $v \Vdash \varphi$ then of course $s \Vdash \varphi$. Contradiction.

\section{Completeness and canonical model}

In this section we prove completeness of the system \system with respect to the class of all \pn-frames. At first, we introduce certain basic definitions and lemmas.

\subsection{Useful lemmas}

\begin{df}
\system-theory is a set of well-formed formulas which contains all axioms and is closed under deduction.
\end{df}

Attention: later we shall omit symbols \system and \pn for convenience. The next lemma is quite standard and can be considered as a semantic version of deduction theorem.

\begin{lem}(see \cite{kojima}, Lemma A.1)
\label{deduct}

If $w$ is a theory then $\varphi \rightarrow \psi \in w$ $\Leftrightarrow$ $\psi \in v$ for all theories $v$ such that $w \cup \{\varphi\} \subseteq v$.

\end{lem}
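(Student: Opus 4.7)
The plan is to prove the two implications separately; the forward direction is immediate from modus ponens and the closure conditions on a theory, while the backward direction is the content of the lemma and rests on the standard syntactic deduction theorem applied to a canonical choice of $v$.

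For the $(\Rightarrow)$ direction, I would assume $\varphi \rightarrow \psi \in w$ and fix an arbitrary theory $v$ with $w \cup \{\varphi\} \subseteq v$. Then both $\varphi \rightarrow \psi$ and $\varphi$ lie in $v$, and since \modpon is a rule of \system and $v$ is closed under deduction, $\psi \in v$. Nothing more is needed.

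For the $(\Leftarrow)$ direction I would argue by contrapositive. Assume $\varphi \rightarrow \psi \notin w$ and let $v$ be the deductive closure of $w \cup \{\varphi\}$ in \system, i.e.\ the set of all formulas derivable in \system from the hypotheses in $w \cup \{\varphi\}$. By construction $v$ is a theory and $w \cup \{\varphi\} \subseteq v$, so $v$ is an admissible witness. It remains to verify that $\psi \notin v$. Suppose towards a contradiction that $\psi \in v$, i.e.\ $w \cup \{\varphi\} \vdash \psi$. Then the syntactic deduction theorem yields $w \vdash \varphi \rightarrow \psi$, and since $w$ is itself closed under deduction we would obtain $\varphi \rightarrow \psi \in w$, contradicting our assumption.

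The one point that genuinely deserves care is the appeal to the syntactic deduction theorem inside \system. By Definition \ref{axio} the only primitive rule of inference in \system is \modpon: \tax enters as an axiom scheme, and the extensionality principle \ext, although mentioned, does not appear in the rule base of \system. Since no hypothesis-sensitive rule (such as a necessitation rule) is present, the familiar induction on the length of a derivation that establishes the deduction theorem for \ipc carries over verbatim to \system, which is the only step where a mistake could creep in.
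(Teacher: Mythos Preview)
Your proof is correct, and the forward direction matches the paper exactly. For the backward direction the paper takes a slightly different route: rather than invoking the syntactic deduction theorem, it directly exhibits the witnessing theory as $v = \{\chi : \varphi \rightarrow \chi \in w\}$ and uses the axiom $\mu \rightarrow (\varphi \rightarrow \mu)$ (together with the implication axioms of \ipc) to verify that this $v$ is a theory containing $w \cup \{\varphi\}$ but omitting $\psi$. The two approaches are close cousins---your appeal to the syntactic deduction theorem, once unpacked, is essentially the same computation---but the paper's construction is self-contained and sidesteps the need to separately check that the deduction theorem survives in \system. On that last point, one small correction: \ext \emph{is} listed as a rule of the system (item (4) of Definition~\ref{axio}, despite the set notation in the opening line), and the paper relies on it in the completeness proof. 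Your argument still goes through, because \ext is applied only to theorems rather than to open hypotheses and therefore does not obstruct the inductive proof of the deduction theorem; but your justification should say this rather than that \ext is absent from the rule base.
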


\begin{proof}\textit{(sketch)}
The proof is easy. $\Rightarrow$ direction requires only \modpon rule and $\Leftarrow$ is based on the analysis of set $v = \{\psi; \varphi \rightarrow \psi \in w\}$ and axiom $\mu \rightarrow (\varphi \rightarrow \mu)$.
\end{proof}

In the next point we introduce the notion of \textit{prime} (or \textit{relatively maximal}) \textit {theory}, repeating standard definition from intuitionistic calculus.

\begin{df}
A theory $w$ is said to be \textit{prime} if it satisfies the following conditions:

\begin{enumerate}

\item $\varphi \lor \psi \in w$ $\Leftrightarrow$ $\varphi \in w$ or $\psi \in w$

\item $\bot \notin w$ (i.e. $w$ is consistent)

\end{enumerate}
\end{df}

\begin{lem}
\label{prime}
Each consistent theory $w_{\gamma}$ (which does not contain formula $\gamma$) can be extended to the prime theory $w'_{\gamma}$.
\end{lem}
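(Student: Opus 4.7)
The plan is to run a Lindenbaum-style construction that systematically keeps $\gamma$ out. First I fix an enumeration $\varphi_{0}, \varphi_{1}, \ldots$ of all formulas of the language. Starting from $w_{0} := w_{\gamma}$, I define an ascending chain by letting $w_{n+1}$ be the deductive closure of $w_{n} \cup \{\varphi_{n}\}$ whenever $\gamma$ does not lie in this closure, and setting $w_{n+1} := w_{n}$ otherwise. The candidate prime theory is $w'_{\gamma} := \bigcup_{n} w_{n}$.

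Three properties then need verification. First, $w'_{\gamma}$ is a theory: any derivation uses only finitely many premises, which all lie in a single $w_{n}$, and since that $w_{n}$ is deductively closed, the conclusion already belongs to it, hence to $w'_{\gamma}$. Second, $\gamma \notin w'_{\gamma}$: $w_{0}$ excludes $\gamma$ by assumption, and the construction forbids any successor stage from containing it. Consistency of $w'_{\gamma}$ is then automatic, for $\bot \in w'_{\gamma}$ would yield $\gamma \in w'_{\gamma}$ via the \ipc-axiom $\bot \rightarrow \gamma$ and \modpon.

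The decisive step is primeness. Suppose $\varphi \lor \psi \in w'_{\gamma}$ but neither disjunct belongs to $w'_{\gamma}$, and let $n$, $m$ be their positions in the enumeration. By construction, at stage $n$ adding $\varphi$ would have forced $\gamma$ into the deductive closure of $w_{n} \cup \{\varphi\}$; equivalently, every theory extending $w_{n} \cup \{\varphi\}$ contains $\gamma$. Lemma~\ref{deduct} then delivers $\varphi \rightarrow \gamma \in w_{n} \subseteq w'_{\gamma}$, and symmetrically $\psi \rightarrow \gamma \in w'_{\gamma}$. Combining these with $\varphi \lor \psi$ through the intuitionistic disjunction-elimination schema produces $\gamma \in w'_{\gamma}$, contradicting the avoidance property.

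The main delicacy lies in applying Lemma~\ref{deduct} correctly: one must be careful to translate the syntactic refusal to add $\varphi$ at stage $n$ into exactly the semantic hypothesis of that lemma, namely that $\gamma$ belongs to every theory extending $w_{n} \cup \{\varphi\}$. Once this is done, everything else is routine bookkeeping of a countable inductive chain, and no modal machinery enters, since primeness is purely an intuitionistic feature of theories.
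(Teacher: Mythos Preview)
Your argument is correct and matches the paper's sketch closely: the paper explicitly allows either Lindenbaum's lemma or the countable step-by-step construction you carry out, and its primeness check is precisely your use of Lemma~\ref{deduct} to obtain $\varphi \rightarrow \gamma$ and $\psi \rightarrow \gamma$ followed by intuitionistic disjunction elimination. The only cosmetic difference is that the paper phrases the ``adding a disjunct forces $\gamma$'' step at the level of the finished maximal theory $w'_{\gamma}$ rather than at the finite stage $w_{n}$, but the content is the same.
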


\begin{proof}\textit{(sketch)}
The proof is rather standard and it does not require any specific features of our logic. The first thing is to use Lindenbaum's lemma (or well-known methods for countable languages) which allows us to extend $w_{\gamma}$ to the relatively maximal $w'_{\gamma}$. The second thing is to prove that $w'_{\gamma}$ is actually prime. It is enough to prove that $\gamma \in \overline{w'_{\gamma} \cup \{\varphi\}}$ (resp. $\overline{w'_{\gamma} \cup \{\psi\}}$) where by $\overline{X}$ we mean \textit{deductive closure} of the set of formulas $X$. It is important that we use semantic deduction theorem in this proof.
\end{proof}

\subsection{Canonical model}

\begin{df}
\pncan (\textit{canonical neighborhood model}) is a triple $\langle W, \mathcal{N}, \leq, V \rangle$ where:

\begin{enumerate}

\item $W$ is the set of all prime theories

\item for every $w, v \in W$ we say that $w \leq v$ \textit{iff} $w \subseteq v$

\item $\mathcal{N}$ is a function from $W$ into $P(P(W))$ such that for every $w \in W$ and for each formula $\varphi$: $\mathcal{N}_{w} = \{ \{z \in W; \varphi \in z\}; \Box \varphi \in w \}$.

\item $V: PV \rightarrow P(W)$ is a function defined as it follows: $w \in V(q) \Leftrightarrow q \in w$

\end{enumerate}

\end{df}

Note that we can say: 

\begin{rem}
\label{uwaga}
In each \pncan-model: if $X \subseteq W$, then $X \in \mathcal{N}_{w} \Leftrightarrow$ there is a formula $\varphi$ such that $X = \{z \in W; \varphi \in z\}$ and $\Box \varphi \in w$. 
\end{rem}

\begin{rem}
\label{uwaga2}
In each \pncan-model: $\{z \in W; \varphi \in z\} \in \mathcal{N}_{w} \Leftrightarrow \Box \varphi \in w$.
\end{rem}

One could ask (see \cite{pacuit}) if it possible that $\{z \in W; \varphi \in z\} = \{z \in W; \psi \in z\}$ and $\Box \varphi \in w$ but $\Box \psi \notin w$. Surely, it would spoil our definition. Thus we prove the following two lemmas:

\begin{lem}
\label{equ}
Assume that $W$ is a collection of all prime theories of \system. Suppose now that $\{z \in W; \varphi \in z\} = \{z \in W; \psi \in z\}$. Then $\vdash \varphi \leftrightarrow \psi$, i.e. $\varphi \leftrightarrow \psi \in \system$. 
\end{lem}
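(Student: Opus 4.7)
The plan is to argue by contraposition: assume $\not\vdash \varphi \leftrightarrow \psi$ and exhibit a prime theory $w \in W$ that separates $\varphi$ from $\psi$, showing $\{z \in W; \varphi \in z\} \neq \{z \in W; \psi \in z\}$. Since $\varphi \leftrightarrow \psi$ is shorthand for the conjunction of $\varphi \to \psi$ and $\psi \to \varphi$, without loss of generality I may assume $\not\vdash \varphi \to \psi$ (the symmetric case is handled by swapping $\varphi$ and $\psi$).

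First I would consider $T = \overline{\{\varphi\}}$, the smallest \system-theory containing $\varphi$. By Lemma \ref{deduct} (applied with the minimal theory in place of $w$), the condition $\psi \in T$ would give $\vdash \varphi \to \psi$, contradicting the assumption; hence $\psi \notin T$. Next I would verify that $T$ is consistent: if $\bot \in T$, then Lemma \ref{deduct} yields $\vdash \varphi \to \bot$, i.e. $\vdash \lnot \varphi$, and from the intuitionistic axiom $\lnot \varphi \to (\varphi \to \psi)$ (available in \ipc) we would again obtain $\vdash \varphi \to \psi$, a contradiction. So $T$ is a consistent theory containing $\varphi$ but not $\psi$.

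Now I would apply Lemma \ref{prime} with $\gamma = \psi$ to extend $T$ to a prime theory $w$ with $\psi \notin w$. Since $\varphi \in T \subseteq w$, we have $w \in \{z \in W; \varphi \in z\}$ while $w \notin \{z \in W; \psi \in z\}$, contradicting the hypothesis that these two sets are equal. The symmetric argument (with the roles of $\varphi$ and $\psi$ interchanged) rules out $\not\vdash \psi \to \varphi$, and the conjunction of $\vdash \varphi \to \psi$ and $\vdash \psi \to \varphi$ gives $\vdash \varphi \leftrightarrow \psi$.

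The main obstacle, if any, is simply ensuring that $T$ is consistent so that Lemma \ref{prime} is applicable; this is the one non-trivial step and it relies essentially on intuitionistic reasoning (the principle \emph{ex falso}). Everything else is a routine invocation of the semantic deduction theorem and the prime-extension lemma, so no appeal to the modal operator or the neighborhood function is needed.
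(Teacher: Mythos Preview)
Your proof is correct and follows essentially the same route as the paper: argue by contraposition, assume without loss of generality that $\nvdash \varphi \to \psi$, use Lemma~\ref{deduct} (with the minimal theory) to obtain a theory containing $\varphi$ but not $\psi$, and then extend via Lemma~\ref{prime} to a prime theory separating the two sets. Your version is in fact a bit more careful than the paper's, since you explicitly verify the consistency of $T$ before invoking Lemma~\ref{prime}, a point the paper leaves implicit.
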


\begin{proof}
Suppose that $\nvdash \varphi \leftrightarrow \psi$. We can assume (without loss of generality) that $\nvdash \varphi \rightarrow \psi$, so $\varphi \rightarrow \psi \notin \system$. From lemma \ref{deduct} there exists theory $v$ such that $\varphi \in v$ but $\psi \notin v$. By means of lemma \ref{prime} we can expand $v$ to the prime theory $t$ such that $\psi \notin v$ (of course, $\varphi \in t$). But now $t \in \{z \in W; \varphi \in z\}$ - so, as we assumed, $t \in \{z \in W; \psi \in z\}$. Contradiction.
\end{proof}

\begin{lem}
In \pncan-model $\langle W, \mathcal{N}, \leq, V \rangle$ we have the following property: for each prime theory $w$, if $\{z \in W; \varphi \in z\} \in \mathcal{N}_{w}$ and $\{z \in W; \varphi \in z\} = \{z \in W; \psi \in z\}$, then $\Box \psi \in w$.
\end{lem}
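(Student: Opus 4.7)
The plan is to unpack the existential hidden in the definition of $\mathcal{N}_{w}$ and then combine Lemma \ref{equ} with the extensionality rule \ext. The key conceptual point is that the hypothesis $\{z \in W; \varphi \in z\} \in \mathcal{N}_{w}$ does not, by itself, entail $\Box \varphi \in w$: by Remark \ref{uwaga}, it only guarantees the existence of \emph{some} witness formula $\chi$ such that $\Box \chi \in w$ and $\{z \in W; \chi \in z\} = \{z \in W; \varphi \in z\}$. So one must first introduce this $\chi$ before any syntactic manipulation can be applied.

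Once the witness $\chi$ is in hand, I would combine it with the second hypothesis: from $\{z \in W; \chi \in z\} = \{z \in W; \varphi \in z\}$ and $\{z \in W; \varphi \in z\} = \{z \in W; \psi \in z\}$ I obtain $\{z \in W; \chi \in z\} = \{z \in W; \psi \in z\}$. Lemma \ref{equ} then yields $\vdash \chi \leftrightarrow \psi$, and an application of \ext gives $\vdash \Box \chi \leftrightarrow \Box \psi$. In particular, $\Box \chi \rightarrow \Box \psi$ is a theorem of \system.

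To finish, I would invoke the fact that $w$ is a theory (i.e.\ contains all theorems and is closed under \modpon): together with $\Box \chi \in w$, \modpon on $\Box \chi \rightarrow \Box \psi$ delivers $\Box \psi \in w$, as required.

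The main obstacle is really the conceptual one mentioned above — resisting the temptation to treat $\{z \in W; \varphi \in z\} \in \mathcal{N}_{w}$ as if it were $\Box \varphi \in w$ and instead explicitly producing the witness $\chi$ from the definition of $\mathcal{N}_{w}$. After that, the argument is a direct application of Lemma \ref{equ}, rule \ext and closure of $w$ under deduction. Incidentally, once this lemma is established the non-trivial direction of Remark \ref{uwaga2} follows immediately as a corollary by specialising $\psi := \varphi$.
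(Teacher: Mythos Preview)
Your argument is correct, and it is in fact more careful than the paper's own proof. The paper's proof simply invokes Remark~\ref{uwaga2} to pass directly from $\{z \in W;\ \varphi \in z\} \in \mathcal{N}_{w}$ to $\Box \varphi \in w$, and then applies Lemma~\ref{equ} and \ext to $\varphi$ and $\psi$. You instead unpack the definition of $\mathcal{N}_{w}$ to obtain an explicit witness $\chi$ with $\Box \chi \in w$ and $\{z;\ \chi \in z\} = \{z;\ \varphi \in z\}$, chain the two set equalities, and apply Lemma~\ref{equ} and \ext to $\chi$ and $\psi$.

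The difference matters for exactly the reason you flag: the non-trivial ($\Rightarrow$) direction of Remark~\ref{uwaga2} is precisely the special case $\psi := \varphi$ of the present lemma, so citing it inside this proof is circular. The paper itself acknowledges, just before stating Lemma~\ref{equ}, that Remark~\ref{uwaga2} requires justification and that these two lemmas are meant to supply it; your route delivers that justification cleanly, and your closing observation that Remark~\ref{uwaga2} then drops out as a corollary is exactly how the logical dependencies should be arranged.
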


\begin{proof} (see \cite{pacuit})

If $\{z \in W; \varphi \in z\} \in \mathcal{N}_{w}$ then (from remark \ref{uwaga2})  $\Box \varphi \in w$. From lemma \ref{equ} we have that $\vdash \varphi \leftrightarrow \psi$. Thus, $\varphi \leftrightarrow \psi \in \system$. By \ext, also $\Box \varphi \leftrightarrow \Box \psi \in \system$. In particular, it means that $\Box \varphi \leftrightarrow \Box \psi \in w$. Hence, $\Box \psi \in w$. 

\end{proof}

Now we have the following lemma:

\begin{lem}
\pncan is a well-defined \pn-model.
\end{lem}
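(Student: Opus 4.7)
The plan is to verify three items in turn: that $\leq$ is a partial order on $W$, that $\mathcal{N}$ is well-defined as a function $W\to P(P(W))$ and satisfies the frame condition (\ref{cond}), and finally that $V$ is monotone with respect to $\leq$ in the sense of Definition \ref{pnmodeldef}. None of these should require any new ideas beyond what has been set up above; the proof is essentially bookkeeping, with the substantive observation being very short.

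First I would dispatch the easy items. Since $\leq$ is defined as set-theoretic inclusion on the collection of prime theories, reflexivity, antisymmetry, and transitivity are immediate, so $\langle W,\leq\rangle$ is a poset. For the valuation, if $w\in V(q)$, i.e.\ $q\in w$, and $w\leq v$, i.e.\ $w\subseteq v$, then $q\in v$, so $v\in V(q)$; this is monotonicity. For $\mathcal{N}$ being well-defined as a function, I note that each $\{z\in W;\varphi\in z\}$ is by construction a subset of $W$, so $\mathcal{N}_{w}\subseteq P(W)$; and the set $\mathcal{N}_{w}$ itself is uniquely determined by $w$ via the given set-builder expression (the earlier lemmas together with Remarks \ref{uwaga}, \ref{uwaga2} guarantee that the definition does not depend ambiguously on the choice of representing formula $\varphi$).

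The only item that requires a brief argument is the frame condition (\ref{cond}). Suppose $w\leq v$, $v\in X\subseteq W$, and $X\in\mathcal{N}_{w}$. By Remark \ref{uwaga} there is a formula $\varphi$ with $X=\{z\in W;\varphi\in z\}$ and $\Box\varphi\in w$. Because $w\leq v$ just means $w\subseteq v$, we conclude $\Box\varphi\in v$. But then $X=\{z\in W;\varphi\in z\}\in\mathcal{N}_{v}$ directly from the definition of $\mathcal{N}$ at $v$, which is what condition (\ref{cond}) demands. (Note that the hypothesis $v\in X$ is not even used here; monotonicity of $\mathcal{N}$ along $\leq$ follows already from the monotonicity of membership along $\subseteq$ applied to the formula $\Box\varphi$.)

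There is no real obstacle in this proof; the closest thing to a subtle point is making sure $\mathcal{N}_{w}$ is unambiguously defined despite different formulas potentially generating the same set $\{z;\varphi\in z\}$, and that is precisely the content of Lemma \ref{equ} and the subsequent lemma, both of which have already been established. Combining these verifications yields that $\langle W,\mathcal{N},\leq\rangle$ is a \pn-frame and that $\langle W,\mathcal{N},\leq,V\rangle$ is a \pn-model, as required.
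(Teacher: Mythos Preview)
Your proof is correct and follows essentially the same approach as the paper: the key step, verifying condition~(\ref{cond}), is handled identically by unpacking $X\in\mathcal{N}_w$ as $X=\{z:\varphi\in z\}$ with $\Box\varphi\in w$, then using $w\subseteq v$ to get $\Box\varphi\in v$. You are more thorough than the paper in explicitly checking that $\leq$ is a partial order and that $V$ is monotone, and your observation that the hypothesis $v\in X$ is unused anticipates the paper's later remark that the canonical model in fact satisfies the stronger condition~(\ref{cond2}).
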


\begin{proof}
What is really important to check, is the relation between $\leq$ and neighborhoods. Suppose that $w \subseteq v$ and $v \in X \in \mathcal{N}_{w}$. By definition, $X = \{z \in W; \varphi \in z\}$ for certain $\varphi$ such that $\Box \varphi \in w$. But if $w \subseteq v$, then $\Box \varphi \in v$. Now we recall remark \ref{uwaga} to obtain the final result, that is: $\{z \in W; \varphi \in z\} \in \mathcal{N}_{v}$.
\end{proof}

Below is the crucial lemma:

\begin{lem}(\textit{truth lemma})
\label{truth}
In \pncan-model we have for each $\psi$ and for each $w \in W$: $w \Vdash \varphi \Leftrightarrow \varphi \in w$.
\end{lem}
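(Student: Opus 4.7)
The proof proceeds by induction on the complexity of $\varphi$. The atomic case $\varphi = q \in PV$ is immediate from the definition of $V$ in the canonical model, and $\varphi = \bot$ follows from consistency of prime theories. The conjunction case is routine using the fact that theories are closed under deduction together with the axioms $\varphi \land \psi \to \varphi$, $\varphi \land \psi \to \psi$, and $\varphi \to (\psi \to \varphi \land \psi)$.

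For disjunction $\varphi = \alpha \lor \beta$, primeness of $w$ is what we need: $\alpha \lor \beta \in w$ iff $\alpha \in w$ or $\beta \in w$, which by induction hypothesis matches forcing. The implication case $\varphi = \alpha \to \beta$ is the standard intuitionistic one and is the first slightly delicate step: the right-to-left direction uses Lemma \ref{deduct} and monotonicity along $\subseteq$; the left-to-right direction argues contrapositively — if $\alpha \to \beta \notin w$, then by Lemma \ref{deduct} the theory $\overline{w \cup \{\alpha\}}$ does not contain $\beta$, so by Lemma \ref{prime} we can extend it to a prime theory $t$ with $w \subseteq t$, $\alpha \in t$, $\beta \notin t$, and the induction hypothesis gives $t \Vdash \alpha$, $t \nVdash \beta$, hence $w \nVdash \alpha \to \beta$.

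The modal case $\varphi = \Box \alpha$ is where the specific shape of the system really matters, and this is the step I expect to be the main (though still manageable) obstacle. By definition of forcing, $w \Vdash \Box \alpha$ iff $w \Vdash \alpha$ and $\{z \in W : z \Vdash \alpha\} \in \mathcal{N}_w$. By the induction hypothesis this set equals $\{z \in W : \alpha \in z\}$, so by Remark \ref{uwaga2} the neighborhood condition is equivalent to $\Box \alpha \in w$. For the right-to-left direction, if $\Box \alpha \in w$, then Remark \ref{uwaga2} gives the neighborhood part and axiom \tax together with closure under \modpon gives $\alpha \in w$, hence $w \Vdash \alpha$ by induction. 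For the left-to-right direction, the neighborhood clause by itself already forces $\Box \alpha \in w$ via Remark \ref{uwaga2}, and the additional conjunct $w \Vdash \alpha$ is automatically consistent with this. The clean interplay between \tax on the syntactic side and the conjunctive clause in the forcing definition on the semantic side is exactly what makes the two directions match.
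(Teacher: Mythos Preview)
Your proof is correct and follows essentially the same approach as the paper: induction on formula complexity, with the modal case handled via the induction hypothesis to identify $\{z : z \Vdash \alpha\}$ with $\{z : \alpha \in z\}$, Remark~\ref{uwaga2} for the neighborhood clause, and axiom \tax for the conjunct $w \Vdash \alpha$. You supply more detail on the propositional cases than the paper does (it dismisses them as routine), but the substantive modal argument is the same.
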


\begin{proof}
The proof goes by induction on the complexity of formula. There is one non-trivial case where neighborhoods are involved: that of $\varphi := \Box \psi$.
($\Rightarrow)$. Suppose that $w \Vdash \Box \psi$. Thus $w \Vdash \psi$ and $\{z \in W; z \Vdash \psi\} \in \mathcal{N}_{w}$. By induction, $\psi \in w$ and $\{z \in W; \psi \in z\} \in \mathcal{N}_{w}$. The last statement means (by remark \ref{uwaga2}) that $\Box \psi \in w$.

($\Leftarrow$). Assume that $\Box \varphi \in w$. From \tax we have that $\varphi \in w$. Now by the definition of $\mathcal{N}$ we can say that $\{z \in W; \varphi \in z\} \in w$. But then (by induction hypothesis) $w \Vdash \varphi$ and $\{z \in W; z \Vdash \varphi\} \in \mathcal{N}_{w}$. Thus $w \Vdash \Box \varphi$.
\end{proof}

\begin{tw}\system is complete with respect to the class of all \pn-frames.
\end{tw}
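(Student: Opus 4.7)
The plan is to prove completeness by the standard contrapositive argument through the canonical model \pncan that has just been constructed. Suppose $\varphi \notin \system$. I would first note that the set $T$ of all \system-theorems is a consistent theory not containing $\varphi$: it is trivially closed under deduction and contains all axioms, and consistency follows since \system extends \ipc, so $\bot \notin \system$.

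Next, I would invoke Lemma \ref{prime} with $\gamma := \varphi$ to extend $T$ to a prime theory $w$ with $\varphi \notin w$. Since $W$ in \pncan is by definition the set of all prime theories, $w \in W$. The preceding lemma has already established that \pncan is a genuine \pn-model, so there is a legitimate \pn-model with a distinguished point at our disposal.

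Finally, I would apply the truth lemma (Lemma \ref{truth}): since $\varphi \notin w$, we obtain $w \nVdash \varphi$ in \pncan. The underlying frame of \pncan is thus a \pn-frame on which $\varphi$ fails (under the canonical valuation), so $\varphi$ is not valid on the class of all \pn-frames. Contraposition yields the desired completeness.

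The substantive work has already been absorbed into the earlier lemmas: well-definedness of $\mathcal{N}_w$ despite the ambiguity pointed out before Lemma \ref{equ} (handled via Lemma \ref{equ} together with rule \ext), compatibility of $\leq$ with neighborhoods, and especially the modal case of the truth lemma, where axiom \tax was used essentially to move from $\Box\varphi \in w$ to $w \Vdash \varphi$. Once those ingredients are in place, the completeness statement itself collapses to the short contrapositive sketched above, so I would not expect any genuine obstacle at this final stage.
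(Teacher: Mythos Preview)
Your proposal is correct and follows essentially the same canonical-model route as the paper: extend a theory not containing $\varphi$ to a prime theory, then invoke the truth lemma. The only difference is one of scope: the paper starts from an \emph{arbitrary} theory $w$ with $w \nvdash \varphi$ (thereby establishing strong completeness, i.e.\ that $\varphi$ is not a semantic consequence of $w$), whereas you take the particular theory $T = \system$ and obtain weak completeness; since the statement as phrased only asks for completeness, your version suffices.
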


\begin{proof}
Suppose that $w$ is a theory and $w \nvdash \varphi$. In particular this means that $\varphi \notin w$. Then we can extend $w$ to the prime theory $v$ such that $w \subseteq v$ and $\varphi \notin v$. Of course for each $\psi \in w$, we have $\psi \in v$. Now we use lemma \ref{truth} to say that $v \Vdash \psi$ and $v \nVdash \varphi$. The last statement means in particular that $\varphi$ is not a semantical consequence of $w$.
\end{proof}

\subsection{Additional restriction}

Let us consider the following condition imposed on our models:

\begin{equation}
\label{cond2}
w \leq v \Rightarrow \mathcal{N}_{w} \subseteq \mathcal{N}_{v}
\end{equation}

This restriction is stronger than (\ref{cond}). We can prove the following lemma:

\begin{lem}
In \pncan-model condition (\ref{cond2}) is valid.
\end{lem}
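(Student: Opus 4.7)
The plan is to unwind the definitions of the canonical model and observe that the condition falls out almost immediately, since $\leq$ on prime theories is set inclusion and neighborhoods are indexed by the formulas whose box belongs to the theory.

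First I would fix $w, v \in W$ with $w \leq v$, which by definition means $w \subseteq v$, and pick an arbitrary $X \in \mathcal{N}_w$. By Remark \ref{uwaga}, there exists a formula $\varphi$ with $X = \{z \in W; \varphi \in z\}$ and $\Box \varphi \in w$. Next I would use the inclusion $w \subseteq v$ to conclude $\Box \varphi \in v$. Finally I would apply Remark \ref{uwaga2} (in the direction from $\Box \varphi \in v$ to membership in $\mathcal{N}_v$) to obtain $X = \{z \in W; \varphi \in z\} \in \mathcal{N}_v$, giving the desired inclusion $\mathcal{N}_w \subseteq \mathcal{N}_v$.

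There is really no obstacle here: the whole argument is a one-line pass through the definition, because the neighborhood function in \pncan is entirely determined by which formulas of the form $\Box \varphi$ sit in a prime theory, and this collection is monotone under $\subseteq$. The only thing I would make sure to state cleanly is the representation guaranteed by Remark \ref{uwaga}, so that it is transparent which $\varphi$ carries $X$ across from $\mathcal{N}_w$ to $\mathcal{N}_v$.
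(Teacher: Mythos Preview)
Your proposal is correct and essentially identical to the paper's own proof: both take $w \subseteq v$ and an arbitrary $X \in \mathcal{N}_w$, unpack $X = \{z \in W; \varphi \in z\}$ with $\Box \varphi \in w$, push $\Box \varphi$ into $v$ by inclusion, and conclude $X \in \mathcal{N}_v$. The only difference is cosmetic—you cite Remarks \ref{uwaga} and \ref{uwaga2} explicitly where the paper simply invokes ``the definition of neighborhood in canonical model.''
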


\begin{proof}
Suppose that we have prime theories $w, v$ such that $w \subseteq v$. Consider an arbitrary $X \in \mathcal{N}_{w}$. Of course, by the definition of neighborhood in canonical model, $X = \{z \in W; \varphi \in z\}$ for certain $\varphi$ such that $\Box \varphi \in w$. If $w \subseteq v$, then $\Box \varphi \in v$. Thus $X \in \mathcal{N}_{v}$.
\end{proof}

The last result means that we can limit completeness of \system to the class of all \pn-frames satisfying (\ref{cond2}). We shall call them \pnn-frames (models). 

One could say that in the presence of (\ref{cond2}) we may simplify our definition of forcing - without violating monotonicity. In fact, we can introduce the following definition: 

$w \Vdash \Box \varphi \Leftrightarrow \{z \in W; z \Vdash \varphi\} \in \mathcal{N}_{w}$

This approach is identical with the classical one. Note, however, that now we have different logic which is not equivalent with \system. For example, axiom $T$ does not hold. This system is just intuitionism with modal rule of extensionality.

\section{Possibility operator}

In this section we work with \pnn-frames. Our goal is to establish sensible notion of possibility operator $\nabla$. We propose the following definition:

$w \Vdash \nabla \varphi \Leftrightarrow$ there are $X \in \mathcal{N}_{w}$ and $z \in X$ such that $z \Vdash \varphi$

\begin{tw}
In every \pnn-model $M = \langle W, \mathcal{N}, \leq, V \rangle$: if $w \Vdash \varphi$ and $w \leq v$, then $v \Vdash \varphi$.
\end{tw}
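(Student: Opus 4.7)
The plan is to run induction on formula complexity, reusing the previous monotonicity theorem for all cases except the new possibility operator $\nabla$. Since \pnn-models are a restriction of \pn-models, the clauses for propositional variables, $\bot$, $\land$, $\lor$, $\rightarrow$ and $\Box$ are handled exactly as before (the $\Box$ case in particular relied only on (\ref{cond}), which is implied by (\ref{cond2})). So the only genuinely new work is the inductive step for $\nabla$.

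For the $\nabla$ case, I would argue as follows. Suppose $w \Vdash \nabla \varphi$ and $w \leq v$. By the definition of forcing, there exist $X \in \mathcal{N}_{w}$ and $z \in X$ with $z \Vdash \varphi$. The key observation is that in a \pnn-model the stronger restriction (\ref{cond2}) holds, so from $w \leq v$ we obtain $\mathcal{N}_{w} \subseteq \mathcal{N}_{v}$, and therefore $X \in \mathcal{N}_{v}$. The very same $z \in X$ still witnesses $z \Vdash \varphi$, so by the forcing clause for $\nabla$ we conclude $v \Vdash \nabla \varphi$. Notice that we do not even need the inductive hypothesis applied to $\varphi$ here, because we are carrying over an existing witness $z$, not creating a new one at $v$.

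I do not expect any real obstacle: the clause for $\nabla$ is an existential statement, and (\ref{cond2}) directly transports the witnessing neighborhood along $\leq$. If anything, the only subtle point worth flagging is precisely why (\ref{cond2}) (rather than the weaker (\ref{cond})) is what is needed: under (\ref{cond}) alone, moving from $w$ to $v$ would require also knowing that $v \in X$, which is not given in the $\nabla$ clause; hence the shift to \pnn-models in this section is essential, and the argument would fail in a plain \pn-model. I would mention this briefly to motivate the restriction but otherwise keep the proof short, since the propositional and $\Box$ cases can be dispatched by a single sentence referring back to the earlier monotonicity theorem.
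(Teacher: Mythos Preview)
Your proposal is correct and mirrors the paper's own proof: reduce all connectives except $\nabla$ to the earlier monotonicity theorem, and for $\nabla$ transport the witnessing neighborhood $X$ from $\mathcal{N}_{w}$ to $\mathcal{N}_{v}$ via (\ref{cond2}). Your additional remarks about the inductive hypothesis being unnecessary for $\nabla$ and about (\ref{cond}) being insufficient are accurate and in fact anticipate the paper's comment immediately following its proof.
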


\begin{proof}
Of course \pnn-models are subclass of \pn-models. Thus we do not check monotonicity for $\land, \lor, \rightarrow$ and $\Box$. But let us assume that $w \Vdash \nabla \gamma$ and $w \leq v$. Then there is $X \in \mathcal{N}_{w}$ such that for certain $z \in X$ we have $z \Vdash \gamma$. Now $X \in \mathcal{N}_{v}$ so we can say that $v \Vdash \nabla \varphi$.
\end{proof}

Note that we have explicitly used the fact that $\mathcal{N}_{w} \subseteq \mathcal{N}_{v}$. One can check that monotonicity of forcing holds (in \pnn-models) also with the following interpretation of possibility:

$w \Vdash \Diamond \varphi \Leftrightarrow $ for each $X \in \mathcal{N}_{w}$ there is $z \in X$ such that $z \Vdash \varphi$

However, the second approach is quite problematic. While $\Box \varphi \rightarrow \nabla \varphi$ is true, then we cannot say the same about $\Box \varphi \rightarrow \Diamond \varphi$. Roughly speaking, $\Box$ guarantees us that the set of all worlds satisfying $\varphi$ is one of the $w$-neighborhoods. But it does not guarantee that in \textit{each} $w$-neighborhood we shall find world satisfying $\varphi$.

\section{Question of axiom 4}

Axiom \four (i. e. $\Box \varphi \rightarrow \Box \Box \varphi$) is typical for propositional system \sfour, introduced by Lewis. In standard neighborhood setting for classical modal logics this formula corresponds to the following condition (see \cite{indrze}):

\begin{equation} \label{cztery}
X \in \mathcal{N}_{w} \Rightarrow \{v \in W; X \in \mathcal{N}_{v}\} \in \mathcal{N}_{w} \tag{$\star$}
\end{equation}

We shall show that this restriction is too weak for characterization of \four in our environment. Let us consider the following \pn-model $M = \{W, \leq, \mathcal{N}, V\}$:

$W = \{v, z, u\}, \mathcal{N}_{v} = \{ \{u, v\}, \{v, z\} \}, \mathcal{N}_{z} = \{ \{u, v\}, \{v, z\} \}, \mathcal{N}_{u} = \{ \{u\} \}, V(\varphi) = \{v, u\}$

One can easily check that $M$ is a proper \pn-model which satisfies \eqref{cztery}. Now we can say that $v \Vdash \Box \varphi$ because $v \Vdash \varphi$ and $\{x \in W; x \Vdash \varphi\} = \{v, u \} \in \mathcal{N}_{v}$. On the other hand, $v \nVdash \Box \Box \varphi$ because $\{x \in W; x \Vdash \Box \varphi\} = \{v \} \notin \mathcal{N}_{v}$.

For this reason we have found another characterization:

\begin{lem}
Axiom \four holds in \pn-model $M$ \textit{iff} $M$ satisfies the following condition:

\begin{equation}\label{czterydwa}
X \in \mathcal{N}_{w} \Rightarrow \left(Y \subseteq X \Rightarrow \left(Y \in \mathcal{N}_{w} \right) \right) \tag{$\star \star$}
\end{equation}

\end{lem}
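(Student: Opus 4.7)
The proof separates into the two directions of the biconditional; the content is very asymmetric, with the ``sufficiency'' direction a short computation and ``necessity'' carrying essentially all the difficulty.

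For $(\star\star) \Rightarrow \four$, the argument boils down to exploiting axiom \tax, which is already baked into the forcing clause for $\Box$. Fix $w$, a formula $\varphi$, and any $u$ with $w \leq u$ such that $u \Vdash \Box\varphi$. By the clause, $u \Vdash \varphi$ and $V(\varphi) := \{z \in W : z \Vdash \varphi\}$ lies in $\mathcal{N}_u$. Because $z \Vdash \Box\varphi$ forces $z \Vdash \varphi$ for every $z \in W$, we have $V(\Box\varphi) \subseteq V(\varphi)$. Applying \eqref{czterydwa} at $u$ with $X = V(\varphi)$ and $Y = V(\Box\varphi)$ yields $V(\Box\varphi) \in \mathcal{N}_u$; together with $u \Vdash \Box\varphi$ this gives $u \Vdash \Box\Box\varphi$, and hence $w \Vdash \Box\varphi \to \Box\Box\varphi$.

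For $\four \Rightarrow (\star\star)$, I argue contrapositively: suppose $X \in \mathcal{N}_w$, $Y \subseteq X$, $Y \notin \mathcal{N}_w$, and try to refute $\Box\varphi \to \Box\Box\varphi$ at $w$. The natural plan is to exhibit a formula $\varphi$ with $V(\varphi) = X$ (so that $w \Vdash \Box\varphi$ as soon as $w \in X$) and with $V(\Box\varphi) = Y$, giving $V(\Box\varphi) \notin \mathcal{N}_w$ and hence $w \nVdash \Box\Box\varphi$. The main obstacle I expect is that under a fixed valuation the specific sets $X$ and $Y$ need not be extensions of any formula at all; the cleanest way out is to interpret ``holds in $M$'' at the frame level, picking a fresh propositional variable $p$ with $V(p) = X$, so that $V(\Box p) = \{z \in X : X \in \mathcal{N}_z\}$ is the subset actually produced by the $\Box$-clause, and arranging the witness so that this subset is precisely the one missing from $\mathcal{N}_w$. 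Handling this realisation step --- matching the arbitrary $Y$ of the statement to a genuine $V(\Box p)$ --- is where the bulk of the work lies, and is the part of the argument I would expect to need the most care.
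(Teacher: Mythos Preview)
Your argument for the direction $(\star\star)\Rightarrow\four$ is exactly the paper's: from $u\Vdash\Box\varphi$ one has $u\Vdash\varphi$ and $V(\varphi)\in\mathcal N_u$, and since $V(\Box\varphi)\subseteq V(\varphi)$ by the forcing clause, $(\star\star)$ yields $V(\Box\varphi)\in\mathcal N_u$, whence $u\Vdash\Box\Box\varphi$. (Your extra care with $w\leq u$ is harmless.)

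For the converse the paper does \emph{not} attempt anything like your contrapositive argument. It simply observes that the specific model displayed just before the lemma --- the one used to show that the classical condition $(\star)$ is too weak --- fails $(\star\star)$ while also refuting axiom \four. That is: the paper exhibits a single model in which both $(\star\star)$ and \four fail, and calls this ``the other direction''. Logically this does not establish the stated implication $\four\Rightarrow(\star\star)$; it only shows that the earlier counter-example is consistent with the lemma.

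Your instinct that the ``realisation step'' is the crux is correct, and in fact it cannot be carried out: the biconditional is false as literally stated. Take $W=\{w,v\}$ with $\leq$ the identity, $\mathcal N_w=\mathcal N_v=\{\,\{w,v\}\,\}$. Condition~\eqref{cond} is trivially satisfied. Here $(\star\star)$ fails (e.g.\ $\{w\}\subseteq\{w,v\}\in\mathcal N_w$ but $\{w\}\notin\mathcal N_w$), yet axiom \four holds under every valuation: if $u\Vdash\Box\varphi$ then $V(\varphi)=\{w,v\}$, hence also $V(\Box\varphi)=\{w,v\}\in\mathcal N_u$, so $u\Vdash\Box\Box\varphi$. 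Thus no amount of care in matching an arbitrary $Y\subseteq X$ to some $V(\Box p)$ will rescue the general converse; what the paper actually supplies is only the sufficiency of $(\star\star)$ together with the observation that its earlier counter-model is compatible with that sufficiency.
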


\begin{proof}

Assume that \pn-model $M = \{W, \leq, \mathcal{N}, V \}$ satisfies \eqref{czterydwa} and there is $w \in W$ such that $w \Vdash \Box \varphi$. Hence, $w \Vdash \varphi$ and $X = \{x \in W; x \Vdash \varphi\} \in \mathcal{N}_{w}$. But $Y = \{x \in W; x \Vdash \Box \varphi\} \subseteq X$ - and thus $Y \in \mathcal{N}_{w}$. So by the definition of forcing $w \Vdash \Box \Box \varphi$.

As for the other direction, we can use earlier counter-example. Clearly, it does not satisfy \eqref{czterydwa}.

\end{proof}

\section{Further investigations}

This paper should be considered only as a short introduction into research of weak modal logics based on intuitionistic core. There are still many opened questions. For example, it would be interesting to obtain completeness results for weak bi-modal intuitionistic logics, i.e. with possibility operator (defined as here or in a different way). Also, it would be fruitful to characterize various frame conditions by means of formulas (still with completeness). What is important from our point of view, is to use additional tools (axioms, restrictions on frames etc.) without going "too far". In other words, we do not want "too strong" logics (even if notions of modal "weakness" and "strongness" are somewhat unclear or arbitrary). Finally, there is also another interesting task: to combine modalities (and neighborhoods) with subintuitionistic systems (in non-trivial way). As far as we know, the area of \textit{subintuitionistic modal logics} is almost \textit{terra incognita}.


\begin{thebibliography}{90}
 
\bibitem{bozic} M. Bo\v{z}ic, K. Do\v{s}en, \emph{Models for normal intuitionistic modal logics}, Studia Logica XLIII (1984).

\bibitem{dalmonte} T. Dalmonte, Ch. Grellois, N. Olivetti, \emph{Towards intuitionistic non-normal modal logic and its calculi}, \url{https://members.loria.fr/DGalmiche/files/=papers/EICNCL2018/EICNCL2018\_paper\_3.pdf}.

\bibitem{kojima} 
K. Kojima, \emph{Relational and Neighborhood Semantics for Intuitionistic Modal Logic}, Reports on Mathematical Logic 47 (2012).

\bibitem{indrze} A. Indrzejczak, \emph{Labelled tableau calculi for weak modal logics}, Bulletin of the Section of Logic, Volume 36: 3/4 (2007), pp. 159 - 171. 

\bibitem{moniri} M. Moniri, F. S. Maleki, \emph{Neighborhood semantics for basic and intuitionistic logic}, Logic and Logical Philosophy, Volume 23 (2015), 339-355.

\bibitem{pacuit} E. Pacuit, \emph{Neighborhood Semantics for Modal Logic}, Springer International Publishing AG 2017.

\bibitem{plotkin} G. D. Plotkin, C. P. Stirling \emph{A framework for intuitionistic modal logic}, in: J. Y. Halpern, editor, \emph{Theoretical Aspects of Reasoning About Knowledge}, 1986.

\bibitem{simpson}  A. Simpson, \emph{The Proof Theory and Semantics of Intuitionistic Modal Logic}, PhD Thesis at the University of Edinburgh (1994), homepages.inf.ed.ac.uk/als/Research/thesis.pdf.

\bibitem{atw} T. Witczak, \emph{Intuitionistic Modal Logic Based on Neighborhood Semantics Without Superset Axiom}, \url{https://arxiv.org/pdf/1707.03859.pdf}.

\end{thebibliography}
\end{document}